\newtheorem{theorem}{Theorem}[section]
\newtheorem{lemma}[theorem]{Lemma}
\theoremstyle{definition}
\newtheorem{definition}[theorem]{Definition}
\newtheorem{corollary}{Corollary}[theorem]
\newtheorem{proposition}{Proposition}[theorem]
\theoremstyle{remark}
\numberwithin{equation}{section}
\begin{document}

\title{Descent of Properties of Rings and Pairs of Rings to Fixed Rings}


\author{Ravinder Singh}
\address{Department of Mathematics, Dr. B. R. Ambedkar National Institute of Technology Jalandhar}
\email{}
\thanks{singhr@nitj.ac.in}

\subjclass[2010]{Primary 13B21, 13A50, 13A18; Secondary 13A15, 13G05}

\keywords{Fixed ring, Group action, Integral ring extension, Going-down, G-dmian, Pseudo-valuation domain}

\date{}

\dedicatory{}

\begin{abstract}
Let $G$ be a group acting via ring automorphisms on an integral domain $R.$ A ring-theoretic property of $R$ is said to be $G$-invariant, if $R^G$ also has the property, where $R^G=\{r\in R \ | \ \sigma(r)=r \ \text{for all} \ \sigma\in G\},$ the fixed ring of the action. In this paper we prove the following classes of rings are invariant under the operation $R\rightarrow R^G:$ locally pqr domains, Strong G-domains, G-domains, Hilbert rings, $S$-strong rings and root-closed domains. Further let $\mathscr{P}$ be a ring theoretic property and $R\subseteq S$ be a ring extension. A pair of rings $(R,S)$ is said to be a $\mathscr{P}$-pair, if $T$ satisfies $\mathscr{P}$ for each intermediate ring $R\subseteq T\subseteq S.$ We also prove that the property $\mathscr{P}$ descends from $(R,S)\rightarrow (R^G, S^G)$
in several cases. For instance, if $\mathscr{P}=$ Going-down, Pseudo-valuation domain and ``finite length
of intermediate chains of domains", we show each of these properties successfully transfer from $(R,S)\rightarrow (R^G, S^G).$
\end{abstract}

\maketitle
\section{Introduction}\label{intro}
All the rings considered in this note are commutative, are assumed to contain unity and all ring homomorphisms are unital. We denote by $\text{Spec}(R)$ (respectively, $\text{Max}(R)$) the set of prime ideals (respectively, the set of maximal ideals) of $R,$ $\text{Aut}(R)$ the group of
automorphisms of a ring $R$ and $U(R)$ the group of units of $R.$ By an overring of a ring $R,$ we mean a subring of the total quotient ring of $R$ containing $R.$ If $R$ is an integral domain, then the total quotient ring of $R$ is same as the quotient field of $R,$ which we denote by $\text{qf}(R).$ If $R\subseteq S$ is a ring extensions, then the set of all the $R$-subalgebras of $S$ (i.e. of rings $T$ such that $R\subseteq T\subseteq S$) is denoted by $[R,S].$ As in \cite[Page 28]{kap}, LO GU, GD and INC refer to the Lying-over, Going-up, Going-down and Incomparable properties of ring extensions respectively.  

All ``actions" of a group on a ring are assumed to be via ring automorphisms. Given a subgroup $G$ of $\text{Aut}(R),$ the fixed ring of this action or the ring of $G$-invariants is denoted by $R^G=\{r\in R \ | \ \sigma(r)=r \ \text{for all} \ \sigma\in G\}.$ A ring-theoretic property of $R$ is said to be $G$-invariant, if $R^G$ also has the property. For a ring extension $R\subseteq S$ and $G$ a subgroup of $\text{Aut}(S),$ we assume that $\sigma(R)\subseteq R$ for all $\sigma\in G.$ It then follows that $R^G=R\cap S^G.$ The orbit of $s\in S$ under the action of $G$ is denoted by $\mathscr{O}_s=\{\sigma(s)\ | \ \sigma\in G\}.$ Following Glaz \cite{glaz}, we say that the action of $G$ is locally finite on $S$ if $\mathscr{O}_s$ is finite for all $s\in S$ and finite, if $|G|$ is finite. Every finite action is locally finite but not conversely \cite{ds07a}. For a locally finite action of $G$  and $s\in S,$ we put $$\hat{s}=\sum_{\sigma\in G} \sigma(s),\quad \widetilde{s}=\prod_{\sigma\in G} \sigma(s).$$

For long time researchers have been interested in determining the $G$-invariant ring-theoretic properties for the action of a group $G$ on a ring $R.$ The earliest studies in this area were motivated by Nagata's solution of Hilbert's fourteenth problem \cite{ds06}. The authors of these studies were primarily interested in descent of ``finiteness" property of ring a $R$ to the fixed ring $R^G.$ For instance, Nagarajan \cite{nag} studied the action of group on Noetherian rings. Bergman \cite{berg} proved that the fixed ring of any finite group of automorphisms of a Dedekind domain is a Dedekind. More generally, he proved that for a finite group $G$ with $|G|\in U(R),$ the Noetherian property of a ring $R$ is $G$-invariant. Glaz \cite{glaz} studied the conditions under which the finiteness property of coherence and related homological properties of $R$ descend to $R^G.$ For more on results similar in ``spirit" to the aforementioned studies, the reader is referred to \cite{glaz} and references therein.

Most of the results mentioned above are homological in nature, rathar than ideal-theoretic. The ideal-theoretic study, in this area, was apparently started with the paper of Dobbs and Shapiro \cite{ds06} and continued in their subsequent papers \cite{ds07a,ds07b}. Several ring-theoretic properties are shown to preserve in the passage from a ring $R$ to $R^G.$ For instance, it is proved that if the action of $G$ is locally finite, then $R^G\subset R$ is an integral extension (see Lemma \ref{l1} below). Among other things, they also proved the $G$-invariance of integrally closed domains, Krull domains, Pr\"{u}fer domains, going-down domains and divided domains. Motivated by these resuts Schmidt \cite{as17} studied the $G$-inavariance of ``minimal" ring extension $R\subset S$ under various assumptions. Recently Zeidi \cite{nz20} conducted a study of ring-theoretic properties of pairs of rings which are invariant under group actions.

The purpose of this note is to contribute to the circle of ideas initiated by Dobbs and Shapiro \cite{ds06}. In the Section \ref{s2}, we prove the invariance of several ring-theoretic properties under the operation $R\rightarrow R^G.$ For a locally finite group action, in Theorem \ref{2t1} we prove the $G$-invariance of locally pqr domain, a notion introduced by Ramaswamy and Visvanathan \cite{rv76}. This result is then used to deduce the invariance of  Strong G-domains under the operation $R\rightarrow R^G$ (see Corollary \ref{c1} below). The structre of non-Noetherian G-domains (and thus Strong G-domains as well) is quite mysterious. To quote Kaplansky \cite[Page 13]{kap}, `` For non-Noetherian domains the facts are more complex, and we seem to lack even a reasonable conjecture concerning the structure of general G-domains." We also provide a proof of $G$-invariance of G-domains that doesn't depend on the notion of locally pqr domain but is based on a result in \cite{kap} (see Theorem \ref{2t2}). In Theorem \ref{2t3}, we prove that if $R$ is a Hilbert ring then so is $R^G.$ The $G$-invariance of $S$-strong and root-closed domains is also obtained in Section \ref{s2}.

Section \ref{s3} is devoted to the investigation of ring-theoretic properties of pairs of rings that are invariant under the operation $R\rightarrow R^G$ in the sense of Zeidi \cite{nz20}. In Theorem \ref{3t1} we prove that the GD property (see Definition \ref{3d1} below) is preserved in the passage from a pair $(R,S)$ to $(R^G, S^G)$ under finite group actions. This result generalizes a result of Dobbs and Shapiro (\cite[Corollary 2.12]{ds06}), where they proved GD property for the pair $(R,\text{qf}(R)).$ Next we prove $G$-inavariance of Pseudo-valuation domain pair for finite group actions using a result of Jarboui and Trabelsi \cite{jt16}. For finite group actions, this provides a generalization of \cite[Theorem 2.12]{nz20}. Finally, the $G$-invariance of ``finite length of intermediate chains of domains" property is proved in Theorem \ref{3t3}.

\section{Properties of Rings}\label{s2}
In this section we shall prove several ring theoretic properties are stable under the operation $R\rightarrow R^G.$ We start by recalling a result which is fundamental for most of the proofs in this note. When $G$ is finite this is a well known result, see for instance \cite[ Exercise 12, Chapter 5]{am}. More generally, for a locally finite action of $G$ on $R,$ the following result was proved by Dobbs and Shapiro \cite{ds06}. 
\begin{lemma}[\cite{ds06}, Lemma 2.2]\label{l1}
If $G$ is locally finite, then $R$ is integral over $R^G.$
\end{lemma}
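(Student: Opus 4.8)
The plan is to exhibit, for each $r\in R$, an explicit monic polynomial over $R^G$ that annihilates $r$. Since the action is locally finite, the orbit $\mathscr{O}_r$ is a finite set; list its distinct elements as $r=r_1,r_2,\dots,r_n$ and set
\[
f(X)=\prod_{i=1}^{n}(X-r_i)\in R[X].
\]
This $f$ is monic of degree $n$ and $f(r)=0$ (as $r=r_1$), so it remains only to see that the coefficients of $f$ lie in $R^G$.

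For that step I would argue that every $\sigma\in G$ permutes $\{r_1,\dots,r_n\}$. Indeed, if $r_i=\tau(r)$ for some $\tau\in G$, then $\sigma(r_i)=(\sigma\tau)(r)\in\mathscr{O}_r$, so $\sigma$ maps $\mathscr{O}_r$ into itself; and $\sigma$, being an automorphism of $R$, is injective, hence a bijection of the finite set $\mathscr{O}_r$. Letting $\sigma$ act coefficientwise on polynomials, we then get
\[
\sigma(f)(X)=\prod_{i=1}^{n}\bigl(X-\sigma(r_i)\bigr)=\prod_{i=1}^{n}(X-r_i)=f(X),
\]
so each coefficient of $f$ is fixed by each $\sigma\in G$ and therefore belongs to $R^G$. (Concretely, up to sign these coefficients are the elementary symmetric functions in $r_1,\dots,r_n$, which are manifestly $G$-invariant.)

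Combining the two steps, $r$ satisfies the monic polynomial $f\in R^G[X]$, hence $r$ is integral over $R^G$; as $r\in R$ was arbitrary, $R$ is integral over $R^G$. I do not anticipate a genuine obstacle here: the only point where the hypothesis is used is that $G$ may be infinite, so the naive candidate $\prod_{\sigma\in G}(X-\sigma(r))$ need not be a polynomial at all — local finiteness is precisely what lets us replace it by the finite product over the distinct elements of $\mathscr{O}_r$. (When $|G|<\infty$ one may simply take $\prod_{\sigma\in G}(X-\sigma(r))$ directly.)
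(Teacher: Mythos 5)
Your proof is correct and is essentially the standard argument used in the cited source (and in the finite case in Atiyah--MacDonald, Exercise 12 of Chapter 5): the paper itself only quotes the lemma from Dobbs--Shapiro without reproving it, and your polynomial $\prod_i (X-r_i)$ over the distinct orbit elements, with $G$-invariant (elementary symmetric) coefficients, is exactly the intended witness of integrality. The observation that local finiteness is needed precisely to make the orbit product a genuine polynomial is the right way to see where the hypothesis enters.
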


We now recall the notion of G-domain from \cite{kap}.
\begin{definition}
Let $R$ be an integral domain with the quotient field $K.$ We say $R$ is a \emph{G-domain} if it satisfies any of the following two equivalent statements:
(i) $K$ is a finitely generated ring over $R$ (ii) as a ring, $K$ can be generated over $R$ by single element.
\end{definition}  
In order to gain insight into the structure of G-domains and properties of overrings of G-domains, Ramaswamy and Visvanathan introduced a notion of ``Strong G-domain "\cite{rv76}. The class of Strong G-domains forms a subclass of the class of all Pr\"{u}fer domains. 
\begin{definition}[Ramaswamy and Visvanathan \cite{rv76}]
An integral domain $R$ is said to be \emph{Strong G-domain} if every overring of $R$ is of the form $R[1/t]$ for some nonzero $t\in R.$
\end{definition}
The study of Strong G-domain is closely related to a weaker notion of ``principal quotient ring", we recall
\begin{definition}[Ramaswamy and Visvanathan \cite{rv76}]
A quotient ring $R'$ of a domain $R$ with respect to a multiplicative set is called a \emph{principal quotient ring} (\emph{pqr}) of $R$ if $R'=R_M$ for a multiplicative set $M=\{1,t,t^2,\cdots, t^n,\cdots\}$ generated by a single element $t\in R.$ A domain $R$ is said
to be \emph{locally pqr} if for every prime ideal $\mathfrak{p}$ of $R$, the localization $R_{\mathfrak{p}}$ is a principal quotient ring of $R.$
\end{definition}
Trivially every strong G-domain is locally pqr and every locally pqr domain is a G-domain but not conversely \cite{rv76}. We now prove the invariance of locally pqr under the operation $R\rightarrow R^G.$
\begin{theorem}\label{2t1} 
Assume that $R$ is an integral domain and $G$ is locally finite. If $R$ is locally pqr domain then so is $R^G.$
\end{theorem}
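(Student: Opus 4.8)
Here is my plan for proving Theorem \ref{2t1}.

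\medskip

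The plan is to start from a prime ideal $\mathfrak{q}$ of $R^G$ and produce a single element $t\in R^G$ such that $(R^G)_{\mathfrak{q}}=(R^G)[1/t]$. First I would invoke Lemma \ref{l1}: since $G$ is locally finite, $R$ is integral over $R^G$, so the extension $R^G\subseteq R$ satisfies LO, GU and INC. Hence there is a prime $\mathfrak{p}$ of $R$ lying over $\mathfrak{q}$, and the classical orbit argument (the group $G$ acts transitively on the primes of $R$ lying over a given prime of $R^G$, and $\mathfrak{q}=\mathfrak{p}\cap R^G$) applies. Because $R$ is locally pqr, we may pick $s\in R\setminus\mathfrak{p}$ with $R_{\mathfrak{p}}=R[1/s]$, i.e. $R_{\mathfrak{p}}=R_S$ where $S=\{1,s,s^2,\dots\}$.

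\medskip

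The key construction is to replace $s$ by a $G$-fixed element that still ``sees'' the same localization. The natural candidate is $t=\widetilde{s}=\prod_{\sigma\in G}\sigma(s)\in R^G$, which is well defined since the action is locally finite. The point is that $t\notin\mathfrak{q}$: indeed, if $t\in\mathfrak{q}\subseteq\mathfrak{p}$, then since $\mathfrak{p}$ is prime some $\sigma(s)\in\mathfrak{p}$, whence $s\in\sigma^{-1}(\mathfrak{p})$; but $\sigma^{-1}(\mathfrak{p})$ also lies over $\mathfrak{q}$ and one checks (via INC together with transitivity of the $G$-action on the fiber over $\mathfrak{q}$) that this forces $s\in\mathfrak{p}$, a contradiction. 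So $t\in R^G\setminus\mathfrak{q}$, giving an inclusion $(R^G)[1/t]\subseteq (R^G)_{\mathfrak{q}}$.

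\medskip

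For the reverse inclusion I would take $x\in(R^G)_{\mathfrak{q}}$ and show $x\in (R^G)[1/t]$. View $x$ inside $R_{\mathfrak{p}}=R[1/s]$ (using $(R^G)_{\mathfrak{q}}\subseteq R_{\mathfrak{p}}$, which holds because $R^G\setminus\mathfrak{q}\subseteq R\setminus\mathfrak{p}$), so $x=r/s^n$ for some $r\in R$, $n\geq 0$. Multiplying numerator and denominator by $\prod_{\sigma\neq 1}\sigma(s)^n$ turns the denominator into $t^n$ and the numerator into some $r'\in R$; now $x=r'/t^n$ with $t^n\in R^G$, so $r'=x t^n$ lies in the quotient field of $R^G$ intersected with $R$. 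The remaining task is to show $r'\in R^G$, or more precisely to absorb any non-fixed part: since $x$ is fixed and $t^n$ is fixed, $r'$ is fixed, hence $r'\in R\cap\text{qf}(R^G)=R^G$ (here one uses that $R^G$ is integrally closed in... no — rather one uses directly that $r'$ is $G$-invariant, so $r'\in R^G$ by definition). Therefore $x=r'/t^n\in (R^G)[1/t]=(R^G)_{\widetilde{s}}$, completing the proof that $(R^G)_{\mathfrak{q}}$ is a principal quotient ring of $R^G$.

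\medskip

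The main obstacle I anticipate is the verification that $t=\widetilde{s}\notin\mathfrak{q}$: this is where the integrality of $R$ over $R^G$ and the transitivity of the $G$-action on the primes over $\mathfrak{q}$ (i.e. INC plus the norm/orbit argument) are genuinely needed, and it must be set up carefully so that ``some conjugate of $s$ lies in $\mathfrak{p}$'' can be pushed back to ``$s$ itself lies in $\mathfrak{p}$.'' A secondary technical point is checking the identification $(R^G)_{\mathfrak{q}}\subseteq R_{\mathfrak{p}}$ and that clearing denominators keeps everything inside $R$; these are routine once the localizations are correctly nested.
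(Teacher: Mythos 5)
Your strategy coincides with the paper's: lift $\mathfrak{q}$ to $\mathfrak{p}\in\text{Spec}(R)$, choose $s$ with $R_{\mathfrak{p}}=R[1/s]$, and pass to $\widetilde{s}=\prod_{\sigma\in G}\sigma(s)\in R^G$. But the step you yourself flag as the main obstacle, namely $\widetilde{s}\notin\mathfrak{q}$, is a genuine gap, and your proposed justification cannot be repaired: it is not true that $s\in\sigma^{-1}(\mathfrak{p})$ can be pushed back to $s\in\mathfrak{p}$. Quite the opposite. Since $R_{\mathfrak{p}}=R[1/s]$, the primes of $R$ not containing $s$ are exactly the primes contained in $\mathfrak{p}$ (this is the criterion of \cite[Proposition 2.1(3)]{rv76}); so if $\sigma^{-1}(\mathfrak{p})\neq\mathfrak{p}$, then INC applied to these two primes over $\mathfrak{q}$ gives $\sigma^{-1}(\mathfrak{p})\not\subseteq\mathfrak{p}$, which \emph{forces} $s\in\sigma^{-1}(\mathfrak{p})$, i.e.\ $\sigma(s)\in\mathfrak{p}$. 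Hence $\widetilde{s}\in\mathfrak{p}\cap R^G=\mathfrak{q}$ whenever the fiber over $\mathfrak{q}$ is not a singleton. Concretely, take $R=\mathbb{Z}_{(5)}[i]$ with $G$ generated by complex conjugation: $R$ is a semilocal PID with maximal ideals $(2+i)$ and $(2-i)$ and is locally pqr, and $R^G=\mathbb{Z}_{(5)}$. For $\mathfrak{q}=5\mathbb{Z}_{(5)}$ and $\mathfrak{p}=(2+i)$, every admissible $s$ lies in $(2-i)\setminus(2+i)$, so $\widetilde{s}=s\overline{s}\in 5\mathbb{Z}_{(5)}=\mathfrak{q}$ and $R^G[1/\widetilde{s}]=\mathbb{Q}$, whereas $(R^G)_{\mathfrak{q}}=\mathbb{Z}_{(5)}$ (here the correct choice is $t'=1$). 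Your reverse-inclusion paragraph is essentially sound, but it only yields $(R^G)_{\mathfrak{q}}\subseteq R^G[1/\widetilde{s}]$, and the example shows this containment can be strict.

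You should also know that the paper's own proof takes the same route and verifies only the other half of the rv76 criterion (that $\widetilde{s}$ lies in every prime of $R^G$ not contained in $\mathfrak{q}$), never that $\widetilde{s}\notin\mathfrak{q}$, so the missing step is not to be found there. The statement does appear salvageable, at least for finite $G$, by choosing a different invariant element: let $J$ be the intersection of all $Q\in\text{Spec}(R)$ with $Q\not\subseteq\sigma(\mathfrak{p})$ for every $\sigma\in G$. Since $J$ contains $\bigcap_{Q\not\subseteq\sigma(\mathfrak{p})}Q\ni\sigma(s)$ and $\sigma(s)\notin\sigma(\mathfrak{p})$, we get $J\not\subseteq\sigma(\mathfrak{p})$ for each $\sigma$; by prime avoidance pick $u\in J$ outside $\bigcup_{\sigma}\sigma(\mathfrak{p})$ and set $t'=\widetilde{u}$. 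Then $t'\in J\cap R^G$, $t'\notin\mathfrak{q}$, and every prime of $R^G$ not contained in $\mathfrak{q}$ contains $t'$, because any prime of $R$ lying over it is contained in no $\sigma(\mathfrak{p})$ and hence contains $J$. That element, rather than $\widetilde{s}$, satisfies the rv76 criterion for $\mathfrak{q}$.
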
 
\begin{proof}
Let $R$ be an integral domain and $P\in\text{Spec}(R).$ Suppose $R$ is locally pqr domain. By \cite[Proposition 2.1 (3)]{rv76}, $R_P=R_t$ if and only if $t$ belongs to every prime ideal $Q$ of $R$ not contained in $P,$ for $t\in R.$ Therefore it is sufficient to prove the existence of an element in $R^G$ satisfying the preceding characterization of locally pqr domains. Fix $\mathfrak{p}\in\text{Spec}(R^G)$ and put $\widetilde{t}=\prod_{\sigma\in G}\sigma(t).$ Since $G$ is locally finite and permutes the elements of the orbit $\mathscr{O}_t,$ therefore $\widetilde{t}\in R^{G}.$ Let $\mathfrak{q}\in\text{Spec}(R^G)$ be an arbitrary prime such that $\mathfrak{q}$ is not contained in $\mathfrak{p}.$ Since $R^{G}\subset R$ is an integral extension by Lemma \ref{l1}, therefore by LO property for integral extensions \cite[Theorem 44]{kap}, there exists a prime ideal $Q$ such that $Q\cap R^G=\mathfrak{q}.$ Since $R$ is locally pqr domain, it follows that $\widetilde{t}\in Q\cap R^G=\mathfrak{q}.$ Thus $R^G$ is locally pqr by \cite[Proposition 2.1 (3)]{rv76}.
\end{proof}
\begin{corollary} \label{c1}
Assume that $R$ is an integral domain and $G$ is locally finite. If $R$ is Strong G-domain, then $R^G$ is also Strong G-domain.
\end{corollary}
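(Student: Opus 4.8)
The plan is to derive the Corollary from Theorem~\ref{2t1} together with two facts already available to us: the $G$-invariance of the Pr\"ufer property (proved by Dobbs and Shapiro in \cite{ds06} and recalled in the introduction) and the structural description of Strong G-domains in \cite{rv76}, namely that an integral domain is a Strong G-domain if and only if it is a Pr\"ufer domain which is locally pqr. Granting this, the argument is immediate: since $R$ is a Strong G-domain it is both Pr\"ufer and locally pqr; Theorem~\ref{2t1} gives that $R^G$ is locally pqr, and \cite{ds06} gives that $R^G$ is Pr\"ufer; hence $R^G$ is a locally pqr Pr\"ufer domain and therefore, by \cite{rv76}, a Strong G-domain. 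In writing this up the one point to check is that the cited equivalence in \cite{rv76} is indeed phrased as ``Pr\"ufer $+$ locally pqr'' (rather than, say, in terms of overrings being localizations, or via Noetherian spectrum); if it is stated differently a short translation will be inserted.

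Should a more self-contained argument be preferred, one can proceed directly. Let $T$ be an overring of $R^G$; put $T'=RT\subseteq\text{qf}(R)$, an overring of $R$. Every $\sigma\in G$ restricts to the identity on $\text{qf}(R^G)$, so $\sigma(T)=T$, and since $\sigma(R)=R$ we get $\sigma(T')=T'$; thus $T'$ is a $G$-stable overring of $R$. As $R$ is a Strong G-domain, $T'=R[1/t]$ for some nonzero $t\in R$, and $G$-stability forces $R[1/\sigma(t)]=R[1/t]$, equivalently $\sqrt{\sigma(t)R}=\sqrt{tR}$, for all $\sigma\in G$. Consequently $\widetilde t=\prod_{\sigma\in G}\sigma(t)$ — which lies in $R^G$, as in the proof of Theorem~\ref{2t1} — satisfies $\sqrt{\widetilde t R}=\sqrt{tR}$, so $T'=R[1/\widetilde t]$. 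It then remains to identify $T$: one checks $T=T'\cap\text{qf}(R^G)=R[1/\widetilde t]\cap\text{qf}(R^G)=R^G[1/\widetilde t]$, which exhibits $T$ in the required form. Here the first equality uses that $R$ is integral over $R^G$ (Lemma~\ref{l1}), so $T'=RT$ is integral over $T$, together with the fact that $T$ is integrally closed in $\text{qf}(R^G)$ because it is an overring of the Pr\"ufer domain $R^G$; the last equality uses the same reasoning with $R^G[1/\widetilde t]$ in place of $T$.

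The substantive point in either route is the passage from properties of $R$ (and its overrings) to the single element $\widetilde t\in R^G$ with $R^G[1/\widetilde t]=T$ — equivalently, in the citation route, the ``converse'' direction of the \cite{rv76} characterization, which is exactly where being Pr\"ufer (hence integrally closed, hence having well-behaved overrings) is really used. Everything else is bookkeeping with integral extensions, radicals of principal ideals, and the $G$-stability of $RT$, all of which is routine; in particular the $G$-invariance of the Pr\"ufer property is quoted, not reproved.
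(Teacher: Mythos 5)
Your first route is precisely the paper's proof: Corollary \ref{c1} is deduced from the characterization in \cite[Theorem 3.5]{rv76} of Strong G-domains as Pr\"{u}fer domains that are locally pqr, combined with Theorem \ref{2t1} for the locally pqr part and \cite[Proposition 2.6(a)]{ds06} for the $G$-invariance of the Pr\"{u}fer property, and the cited equivalence is indeed phrased exactly as ``Pr\"{u}fer $+$ locally pqr,'' so no translation is needed. The self-contained alternative you sketch (producing $\widetilde{t}$ directly from the $G$-stable overring $RT$) also appears sound, but it is not required once the first route goes through.
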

\begin{proof}
By \cite[Theorem 3.5]{rv76}, $R$ is a Strong G-domain if and only if $R$ is a Pr\"{u}fer domain which is loaclly pqr. Suppose $R$ is strong G-domain, therefore it is both Pr\"{u}fer domain and loaclly pqr. Therefore we want to prove that $R^G$ is Pr\"{u}fer domain which loaclly pqr. But $R^G$ is locally pqr domain by Theorem \ref{2t1}. The fact that $R^G$ is also Pr\"{u}fer domain follows from \cite[Proposition 2.6 (a)]{ds06}. Thus $R^G$ is a Strong G-domain.
\end{proof}
If one is willing to relax the hypothesis (and hence the conclusion) of locally pqr domain and work with only G-domain property, then a proof of the $G$-invariance of G-domains can be provided based on \cite[Theorem 24]{kap}. For the sake of completeness we prove this result as well.
\begin{theorem}\label{2t2}
Assume that $R$ is an integral domain and $G$ is locally finite. If $R$ is a G-domain, then $R^G$ is also G-domain.
\end{theorem}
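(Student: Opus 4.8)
The plan is to use the characterization of G-domains from Kaplansky \cite[Theorem 24]{kap}: an integral domain $R$ with quotient field $K$ is a G-domain if and only if there exists a nonzero $u\in R$ such that every nonzero prime ideal of $R$ contains $u$ (equivalently, $K=R[1/u]$, and the nonzero primes of $R$ are exactly those not missing $u$). So suppose $R$ is a G-domain and pick such a $u\in R$, $u\neq 0$. First I would pass to the element $\widetilde{u}=\prod_{\sigma\in G}\sigma(u)$; since $G$ is locally finite it permutes the (finite) orbit $\mathscr{O}_u$, so $\widetilde{u}\in R^G$, and $\widetilde{u}\neq 0$ because $R$ is a domain and each $\sigma(u)\neq 0$. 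The goal is to show that $\widetilde{u}$ witnesses that $R^G$ is a G-domain, i.e.\ every nonzero prime $\mathfrak{q}$ of $R^G$ contains $\widetilde{u}$.

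The key step is the lying-over argument, exactly as in the proof of Theorem \ref{2t1}. By Lemma \ref{l1}, $R^G\subseteq R$ is an integral extension, so by LO for integral extensions \cite[Theorem 44]{kap} there is a prime $Q$ of $R$ with $Q\cap R^G=\mathfrak{q}$. I must check $Q\neq 0$: if $Q=0$ then $\mathfrak{q}=Q\cap R^G=0$, contradicting that $\mathfrak{q}$ is nonzero. Hence $Q$ is a nonzero prime of the G-domain $R$, so $u\in Q$, whence $\widetilde{u}=\prod_\sigma \sigma(u)\in Q$ as well (it has $u$ as a factor, and $Q$ is an ideal). Therefore $\widetilde{u}\in Q\cap R^G=\mathfrak{q}$. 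Since $\mathfrak{q}$ was an arbitrary nonzero prime of $R^G$, the element $\widetilde{u}$ lies in every nonzero prime of $R^G$, so $R^G$ is a G-domain by \cite[Theorem 24]{kap}.

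The only subtlety — and the step I would be most careful about — is making sure the translation between the two formulations of the Kaplansky criterion is invoked correctly in both directions: from "$R$ is a G-domain" one extracts the element $u$ in every nonzero prime, and after producing $\widetilde{u}$ one feeds it back into the criterion to conclude "$R^G$ is a G-domain". The integral-extension and lying-over machinery is routine here and already used above, so no further obstacle is anticipated; one should just note explicitly that $\widetilde{u}\ne 0$ (using that $R$ is a domain) and that the prime $Q$ produced by LO is nonzero (using $\mathfrak{q}\ne 0$).
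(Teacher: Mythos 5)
Your argument is correct, but it is a genuinely different proof from the one in the paper. You use the elementwise characterization of G-domains --- there is a nonzero $u\in R$ lying in every nonzero prime, equivalently $\mathrm{qf}(R)=R[1/u]$ --- which is Kaplansky's Theorem 19 rather than Theorem 24, so you should adjust that citation. You then descend to the invariant element $\widetilde{u}=\prod_{\sigma\in G}\sigma(u)\in R^G$ (nonzero since $R$ is a domain, and divisible by $u$), and use lying-over for the integral extension $R^G\subset R$ of Lemma \ref{l1} to conclude that $\widetilde{u}$ lies in every nonzero prime of $R^G$; your checks that $\widetilde{u}\neq 0$ and that the prime $Q$ produced by LO is nonzero are exactly the points that need care, and you handle both. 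This is in effect the proof of Theorem \ref{2t1} specialized to the single prime $(0)$, extended so that it applies to arbitrary G-domains rather than only locally pqr ones. The paper deliberately takes a different route: by Kaplansky's Theorem 24, $R$ is a G-domain iff $R[x]$ has a maximal ideal $M$ with $M\cap R=0$; since $R^G[x]\subset R[x]$ is integral, $\mathfrak{m}=M\cap R^G[x]$ is maximal in $R^G[x]$ and meets $R^G$ in $0$, so $R^G$ is a G-domain. Both proofs rest on the same two pillars (integrality from Lemma \ref{l1} plus lying-over/contraction for integral extensions); yours avoids passing to polynomial rings at the cost of constructing the norm-type element $\widetilde{u}$, while the paper's avoids constructing any invariant element at the cost of working in $R[x]$ and $R^G[x]$. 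Either is acceptable; yours has the small additional virtue of exhibiting an explicit element $\widetilde{u}$ with $\mathrm{qf}(R^G)=R^G[1/\widetilde{u}]$.
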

\begin{proof}
The \cite[Theorem 24]{kap} says that an integral domain $R$ is a G-domain if and only if there exists in the polynomial ring $R[x]$ an ideal $M$ which is maximal and satisfies $M\cap R=0.$ Suppose $R$ is a G-domain. By Theorem 24 in \cite{kap}, there exists a maximal ideal $M$ in $R[x]$ such that $M\cap R=0.$ Since $R^G\subset R$ an integral extension, it follows, by \cite[Exercise 9, Chapter 5]{am}, that $R^G[x]\subset R[x]$ is also an integral extension. Since $R^G[x]\subset R[x]$ integral extension, therefore $\mathfrak{m}=M\cap R^G[x]$ is also a maximal ideal. Now $$\mathfrak{m}\cap R^G=(M\cap R^G[x])\cap R^G=\mathfrak{m}\cap R^G=\mathfrak{m}\cap(R\cap R^G)=0.$$ Thus by \cite[Theorem 24]{kap}, $R^G$ is a G-domain.
\end{proof}

Closely related to the class of G-domains, but in some sense ``better behaved", is the class of Hilbert Rings (Bourbaki's terminology is Jacobson Rings). Recall that a prime ideal $\mathfrak{p}$ in a commutative ring $R$ is called \emph{G-ideal} if $R/\mathfrak{p}$ is a G-domain \cite{kap}. Obviously any maximal ideal is a G-ideal.
\begin{definition}
A commutative ring $R$ is called a \emph{Hilbert ring} (or \emph{Jacobson ring}) if every G-ideal in $R$ is maximal.
\end{definition}
\begin{theorem}\label{2t3}
Assume $G$ is locally finite. If $R$ is a Hilbert ring, then $R^G$ is also Hilbert ring.
\end{theorem}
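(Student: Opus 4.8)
The plan is to use the standard characterization of Hilbert rings via polynomial rings together with Lemma~\ref{l1} and its consequences for polynomial extensions. Recall from \cite{kap} that a ring $A$ is a Hilbert ring if and only if every maximal ideal of $A[x]$ contracts to a maximal ideal of $A$; equivalently, for every $\mathfrak{p}\in\mathrm{Spec}(A)$, if $\mathfrak{p}$ is a $G$-ideal then $\mathfrak{p}$ is maximal. So suppose $R$ is a Hilbert ring and let $\mathfrak{p}$ be a $G$-ideal of $R^G$; the goal is to show $\mathfrak{p}$ is maximal in $R^G$.

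First I would pass to polynomial rings: by \cite[Exercise 9, Chapter 5]{am} (as already invoked in the proof of Theorem~\ref{2t2}), the extension $R^G[x]\subseteq R[x]$ is integral, since $R^G\subseteq R$ is integral by Lemma~\ref{l1}. Using the polynomial-ring criterion for $G$-ideals (\cite[Theorem 24]{kap} and the discussion around it in \cite{kap}), the fact that $\mathfrak{p}$ is a $G$-ideal of $R^G$ means there is a maximal ideal $\mathfrak{m}$ of $R^G[x]$ with $\mathfrak{m}\cap R^G=\mathfrak{p}$. Now apply LO for the integral extension $R^G[x]\subseteq R[x]$ (\cite[Theorem 44]{kap}) to get a prime $N$ of $R[x]$ with $N\cap R^G[x]=\mathfrak{m}$; since $\mathfrak{m}$ is maximal and the extension is integral, $N$ is maximal (\cite[Theorem 44]{kap} again, or the standard maximality-is-preserved statement for integral extensions). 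Setting $\mathfrak{q}=N\cap R$, the contraction of a maximal ideal of $R[x]$ along an integral extension from a Hilbert ring: because $R$ is Hilbert, $\mathfrak{q}$ is a maximal ideal of $R$ — this is exactly the characterization of Hilbert rings. Then $\mathfrak{q}\cap R^G = (N\cap R)\cap R^G = N\cap R^G = (N\cap R^G[x])\cap R^G = \mathfrak{m}\cap R^G = \mathfrak{p}$, so $\mathfrak{p}$ is the contraction of the maximal ideal $\mathfrak{q}$ along the integral extension $R^G\subseteq R$. By Lemma~\ref{l1} and the lying-over/maximality transfer for integral extensions, $\mathfrak{p}$ is maximal in $R^G$. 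Hence every $G$-ideal of $R^G$ is maximal, i.e. $R^G$ is a Hilbert ring.

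Alternatively — and this may be cleaner to write — one can argue entirely within $R$ and $R^G$ without passing to $R[x]$, using the following: if $\mathfrak{p}$ is a $G$-ideal of $R^G$, lift it to a prime $\mathfrak{q}$ of $R$ lying over $\mathfrak{p}$ (LO, via Lemma~\ref{l1}); since $R/\mathfrak{q}$ is integral over $R^G/\mathfrak{p}$ and $R^G/\mathfrak{p}$ is a G-domain, $R/\mathfrak{q}$ is a G-domain as well (a G-domain has an overring-like integral extension that is again a G-domain — this uses that G-domain passes up integral extensions, which follows from the quotient-field description since $\mathrm{qf}(R/\mathfrak{q})$ is algebraic, indeed integral, over $\mathrm{qf}(R^G/\mathfrak{p})$ so is finitely generated as a ring over it), whence $\mathfrak{q}$ is a $G$-ideal of $R$, hence maximal because $R$ is Hilbert, and therefore $\mathfrak{p}=\mathfrak{q}\cap R^G$ is maximal in $R^G$ by the standard fact that contractions of maximal ideals under integral extensions are maximal.

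The main obstacle I anticipate is the step asserting that a $G$-ideal pulls back (or pushes forward) correctly along the integral extension — concretely, that if $B$ is integral over $A$ and $\mathfrak{q}\in\mathrm{Spec}(B)$ with $B/\mathfrak{q}$ a G-domain, then $A/(\mathfrak{q}\cap A)$ is a G-domain, and conversely that a $G$-ideal of $A$ lifts to a $G$-ideal of $B$. This is where the quotient-field characterization of G-domains must be combined with the observation that $\mathrm{qf}(B/\mathfrak{q})$ is an integral (hence algebraic, hence finitely generated as a ring) extension of $\mathrm{qf}(A/(\mathfrak{q}\cap A))$ in one direction, and that a field finitely generated as a ring over a subring forces the subring's quotient field to coincide with a localization generated by one element in the other; both facts are in \cite{kap} but need to be cited precisely. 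Everything else — the integrality of $R^G[x]\subseteq R[x]$, LO, and preservation of maximality under integral extensions — is routine and already used earlier in the paper.
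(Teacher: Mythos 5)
Your first argument is exactly the paper's proof: characterize G-ideals as contractions of maximal ideals of the polynomial ring (\cite[Theorem 27]{kap}), lift the maximal ideal of $R^G[x]$ to a maximal ideal of $R[x]$ along the integral extension $R^G[x]\subseteq R[x]$, contract to a G-ideal of $R$, use the Hilbert hypothesis to get maximality in $R$, and descend maximality to $R^G$ via integrality. That part is correct and needs no changes.

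Your alternative argument is a genuinely different (and arguably cleaner) route, but the justification you give for its key step is not right as stated: you claim $\mathrm{qf}(R/\mathfrak{q})$ is ``integral over $\mathrm{qf}(R^G/\mathfrak{p})$ so is finitely generated as a ring over it,'' and an algebraic (even integral) field extension need not be finitely generated as a ring (e.g.\ $\overline{\mathbb{Q}}$ over $\mathbb{Q}$); moreover the G-domain condition concerns finite generation of $\mathrm{qf}(R/\mathfrak{q})$ over $R/\mathfrak{q}$, not over $\mathrm{qf}(R^G/\mathfrak{p})$. The correct one-line argument for ``G-domain ascends along integral extensions of domains'' is: if $\mathrm{qf}(A)=A[1/u]$ and $B$ is a domain integral over $A$, then $B[1/u]$ is integral over the field $A[1/u]$, hence is itself a field containing $B$, hence equals $\mathrm{qf}(B)$, so $B$ is a G-domain. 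With that substitution the alternative proof also goes through. (Note also that neither argument uses local finiteness beyond Lemma~\ref{l1}, which is as it should be.)
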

\begin{proof}
We need to prove that every G-ideal in $R^G$ is maximal. To this end we first recall \cite[Theorem 27]{kap} which says that an ideal $\mathfrak{p}$ in a ring $R$ is a G-ideal if and only if it is the contraction of a maximal ideal in the polynomial ring $R[x].$ Let $\mathfrak{p}\in\text{Spec}(R^G)$ be a G-ideal. By the aforementioned result there exists a maximal ideal $\mathfrak{m}'$ in $R^G[x]$ such that $\mathfrak{p}=\mathfrak{m}'\cap R^G.$ Since $R^G[x]\subset R[x]$ is an integral extension by Lemma \ref{l1} and \cite[Exercise 9, Chapter 5]{am}, there exists a maximal ideal $\mathfrak{m}$ in $R[x]$ such that $\mathfrak{m}'=\mathfrak{m}\cap R^G[x].$ Now $$(\mathfrak{m}\cap R)\cap R^G=\mathfrak{m}\cap R^G=\mathfrak{m}\cap(R^G[x]\cap R^G)=(\mathfrak{m}\cap R^G[x])\cap R^G=\mathfrak{m}'\cap R^G=\mathfrak{p}.$$
That is, the prime ideal $\mathfrak{m}\cap R$ lies over $\mathfrak{p}.$ By \cite[Theorem 27]{kap}, $\mathfrak{m}\cap R$ is a G-ideal in $R$. Since $R$ is a Hilbert ring, therefore $\mathfrak{m}\cap R$ is also a maximal ideal. But $R^G\subset R$ is an integral extension, it follows that $(\mathfrak{m}\cap R)\cap R^G=\mathfrak{p}$ is also maximal. Thus $R^G$ is a Hilbert ring.
\end{proof} 
\begin{definition}
Let $R$ be a subring of a domain $S.$ A prime ideal $\mathfrak{p}$ in $R$ is called \emph{S-strong} if $x,y\in S,$ $xy\in\mathfrak{p}$ implies that either $x\in\mathfrak{p}$ or $y\in\mathfrak{p}.$ If each $\mathfrak{p}\in\text{Spec}(R)$ is $S$-strong then we say $S$ is a \emph{Strong extension} of $R.$
\end{definition}
\begin{proposition}
Suppose $G$ is locally finite. If $R\subset S$ is strong, then so is $R^G\subset S^G.$
\end{proposition}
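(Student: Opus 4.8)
The plan is to show directly that every prime ideal of $R^G$ is $S^G$-strong. So fix $\mathfrak p\in\text{Spec}(R^G)$ together with elements $x,y\in S^G$ satisfying $xy\in\mathfrak p$; I must produce one of the inclusions $x\in\mathfrak p$ or $y\in\mathfrak p$.

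First I would transfer $\mathfrak p$ up to $R$. Since $G$ is locally finite, Lemma~\ref{l1} shows that $R^G\subseteq R$ is an integral extension, so the Lying-over property \cite[Theorem~44]{kap} yields a prime ideal $P$ of $R$ with $P\cap R^G=\mathfrak p$. It is worth keeping the exact equality $P\cap R^G=\mathfrak p$ (not merely a containment), since both directions will be used below.

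Next, the hypothesis that $R\subset S$ is a strong extension says precisely that $P$ is $S$-strong. Because $\mathfrak p\subseteq P$ we have $xy\in P$, and since $x,y\in S^G\subseteq S$, the $S$-strongness of $P$ forces $x\in P$ or $y\in P$; assume $x\in P$, the other case being symmetric. Here is the only real point of the argument: $P$ is an ideal of $R$, so $x\in P\subseteq R$, and combined with $x\in S^G$ this gives $x\in R\cap S^G=R^G$; hence $x\in P\cap R^G=\mathfrak p$. Thus $\mathfrak p$ is $S^G$-strong, and as $\mathfrak p$ was arbitrary, $S^G$ is a strong extension of $R^G$.

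I do not anticipate a genuine obstacle: the whole proof is the interplay of Lemma~\ref{l1} (which makes Lying-over available) with the identity $R^G=R\cap S^G$, which is exactly what lets an element of $S^G$ lying in a prime of $R$ be recognised as an element of $R^G$. In particular, unlike the proofs of Theorem~\ref{2t1} and Corollary~\ref{c1}, the averaging and product operators $\hat s,\widetilde s$ play no role here.
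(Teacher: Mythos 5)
Your proof is correct and is essentially identical to the paper's: both lift $\mathfrak p$ to a prime $P$ of $R$ via Lying-over (available because Lemma~\ref{l1} makes $R^G\subseteq R$ integral), apply the $S$-strongness of $P$, and then use $R\cap S^G=R^G$ to pull the conclusion back down to $\mathfrak p=P\cap R^G$. No differences worth noting.
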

\begin{proof}
Let $\mathfrak{p}\in\text{Spec}(R^G),$ then by LO property for the integral extension $R^G\subset R$ (Lemma \ref{l1}) there exists $P\in\text{Spec}(R)$ such that $\mathfrak{p}=P\cap R^G.$ Suppose $x,y\in S^G\subset S,$ $xy\in\mathfrak{p}.$ Then $xy\in P$ implies $x\in P$ or $y\in P,$ because $P$ is $S$-strong. Without loss of generality suppose $x\in P\subset R$, then it follows that $x\in R\cap S^G= R^G.$ Therefore $x\in P\cap R^G=\mathfrak{p}.$ Thus $R^G\subset S^G$ is Strong extension.
\end{proof}
If $R\subset S$ is a ring extension, then $R$ is said to be \emph{root-closed} in $S$ if, for any $x$ and $n\geq0,$ the containment $x^n\in R$ implies that $x\in R.$
\begin{proposition}
If $R\subset S$ root-closed, then so is $R^G\subset S^G.$
\end{proposition}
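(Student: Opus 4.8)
The plan is to mimic closely the structure of the previous proposition, exploiting the fact that $R^G = R \cap S^G$ and that we may push elements into the fixed ring by taking norm-like products over the (finite) orbit. First I would take an arbitrary element $x \in S^G$ together with an integer $n \geq 0$ such that $x^n \in R^G$, and I would aim to conclude that $x \in R^G$. Since $x^n \in R^G \subseteq R$ and $x \in S^G \subseteq S$, the hypothesis that $R \subset S$ is root-closed immediately gives $x \in R$. Combining this with $x \in S^G$ yields $x \in R \cap S^G = R^G$, which is exactly what we want.

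In fact the argument above does not even need the local finiteness of $G$, only the standing assumption that $\sigma(R) \subseteq R$ for all $\sigma \in G$, which guarantees the identity $R^G = R \cap S^G$ recorded in the introduction; so I would simply invoke that identity and the definition of root-closedness in $S$. The key steps, in order, are: (1) fix $x \in S^G$ and $n \geq 0$ with $x^n \in R^G$; (2) observe $x^n \in R$ and use root-closedness of $R$ in $S$ to deduce $x \in R$; (3) note $x \in S^G$ by choice, hence $x \in R \cap S^G = R^G$. One should be slightly careful about the degenerate case $n = 0$, where $x^0 = 1 \in R^G$ holds vacuously; but then $x^1 = x \in R$ is still forced because $R \subset S$ is root-closed (taking the exponent $1$, or alternatively because any element satisfies $x^1 \in R \Rightarrow x \in R$ trivially once we know $x \in R$; more honestly, root-closedness with $n=1$ is vacuous, so if the intended reading excludes $n=0$ this case simply does not arise). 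I would phrase the proof to use an exponent $n \geq 1$ so that no subtlety intrudes.

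Honestly, there is no real obstacle here: the statement is a one-line consequence of $R^G = R \cap S^G$ plus the definition, and unlike the ``Strong extension'' proposition it requires neither the Lying-Over property nor Lemma \ref{l1}. The only thing worth stating explicitly is why $x \in S^G$ in the first place — which is immediate since we chose $x$ there — and why we are allowed to test root-closedness of $R$ inside $S$ at the element $x \in S$, which is exactly the content of the hypothesis. I would keep the written proof to two or three sentences. If one wanted the statement to look more parallel to its neighbour, one could add the harmless remark that local finiteness of $G$ is not needed, but that is a stylistic choice rather than a mathematical necessity.
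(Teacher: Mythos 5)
Your proof is correct and is essentially identical to the paper's: fix $x\in S^G$ with $x^n\in R^G\subseteq R$, apply root-closedness of $R$ in $S$ to get $x\in R$, and conclude $x\in R\cap S^G=R^G$. Your observation that local finiteness of $G$ is not needed is also consistent with the paper, whose proof likewise never invokes Lemma \ref{l1}.
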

\begin{proof}
Take any $x\in S^G$ such that $x^n\in R^G,$ $n\geq0.$ Then $x^n\in R$ implies that $x\in R,$ by the root-closedness of $R.$ Therefore $x\in R\cap S^G= R^G.$
\end{proof}
\section{Properties of Pairs of Rings}\label{s3}
Let $\mathscr{P}$ be a ring theoretic property and $R\subseteq S$ be a ring extension. Recall that a pair $(R,S)$ is said to be a $\mathscr{P}$-pair, if $T$ satisfies $\mathscr{P}$ for each intermediate ring $R\subseteq T\subseteq S$ \cite{nz20}. Zeidi \cite{nz20} proved the invariance of several ring theoretic properties of pairs of rings under the operation $(R,S)\rightarrow (R^G, S^G).$ In particular, he proved that if $\mathscr{P}=\text{Residaully algebraic, LO, INC,
and Valuation}$ then, each of these properties pass from $(R,S)\rightarrow (R^G, S^G).$ The main purpose of this section is to contribute to this theme. We begin by proving the stability of GD-pair under the operation $(R,S)\rightarrow (R^G, S^G).$
\begin{definition}\label{3d1}
A pair of integral domains $(R,S)$ is said to be a \emph{Going-Down} pair (or \emph{GD}-pair), if the extension $R\subseteq T$ satisfies Going-Down hypothesis for each intermediate ring $R\subseteq T\subseteq S.$
\end{definition}
We now prove that the property of being a GD-pair passes from $(R,S)\rightarrow (R^G, S^G).$
\begin{theorem}\label{3t1}
Assume $(R,S)$ is a GD-pair and $G$ is finite. Then $(R^G, S^G)$ is also a GD-pair.
\end{theorem}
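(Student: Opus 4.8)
The plan is to reduce the statement about the pair $(R^G, S^G)$ to the known GD-behavior of the pair $(R,S)$ by interposing intermediate rings of the original extension. The crux is this: every intermediate ring $U$ with $R^G \subseteq U \subseteq S^G$ must be shown to satisfy GD over $R^G$, and the natural way to do this is to find an intermediate ring $T$ with $R \subseteq T \subseteq S$ such that $U = T^G$ (or at least such that $U$ sits inside $T$ with $T$ integral over $U$). The obvious candidate is $T := R[U]$, the subring of $S$ generated by $R$ and $U$; since $U \subseteq S^G$ and $R$ is $G$-stable, $T$ is a $G$-stable intermediate ring of $R \subseteq S$, so by hypothesis $R \subseteq T$ satisfies GD. One then wants $U = R^G[U] = T^G$, but this may fail in general; what one does get cheaply is that $U \subseteq T^G$ and, crucially, $T$ is integral over $U$ (because $T = R[U]$ and $R$ is integral over $R^G \subseteq U$ by Lemma~\ref{l1}, using finiteness of $G$), hence $T^G$ is integral over $U$ as well.

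First I would set up the prime-ideal diagram for GD: take primes $\mathfrak{q}_0 \subseteq \mathfrak{q}_1$ in $R^G$ and a prime $Q_1$ of $U$ lying over $\mathfrak{q}_1$, and seek $Q_0 \subseteq Q_1$ in $U$ with $Q_0 \cap R^G = \mathfrak{q}_0$. The strategy is to push everything up to $R$ and $T$: by Lemma~\ref{l1} and the LO/GU machinery for the integral extensions $R^G \subseteq R$ and $U \subseteq T$ (\cite[Theorems 44, 46, 48]{kap}), lift $\mathfrak{q}_1$ to a prime $P_1$ of $R$, lift $Q_1$ to a prime $\mathfrak{Q}_1$ of $T$ lying over $Q_1$, and arrange the lifts to be compatible — i.e. $\mathfrak{Q}_1 \cap R = P_1$ — using INC together with GU for the integral extension $R \subseteq T$ (an integral extension always satisfies GD vacuously is false, but it satisfies GU and INC, and that is what is needed to align the two chains). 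Then lift $\mathfrak{q}_0$ to a prime $P_0$ of $R$ with $P_0 \subseteq P_1$. Now apply the GD hypothesis for $R \subseteq T$ to the configuration $P_0 \subseteq P_1 = \mathfrak{Q}_1 \cap R$ to obtain a prime $\mathfrak{Q}_0 \subseteq \mathfrak{Q}_1$ of $T$ with $\mathfrak{Q}_0 \cap R = P_0$. Finally set $Q_0 := \mathfrak{Q}_0 \cap U$; then $Q_0 \subseteq Q_1$ and $Q_0 \cap R^G = \mathfrak{Q}_0 \cap R^G = (\mathfrak{Q}_0 \cap R) \cap R^G = P_0 \cap R^G = \mathfrak{q}_0$, which is exactly what GD for $R^G \subseteq U$ demands.

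The hard part will be the compatibility of the lifts in the middle of the argument — ensuring that when we lift $\mathfrak{q}_1$ to $R$ and $Q_1$ to $T$ we can do so with $\mathfrak{Q}_1 \cap R = P_1$, and simultaneously that $\mathfrak{Q}_1$ lies over $Q_1$ in $U$. This is where I expect to lean on the precise statements in Kaplansky: for the integral extension $R^G \subseteq R$, the primes of $R$ over $\mathfrak{q}_1$ form a nonempty antichain (LO + INC), and similarly for $U \subseteq T$; the extension $R \subseteq T$ being integral (hence satisfying LO, GU, INC) lets one match a chosen prime of $R$ over $\mathfrak{q}_1$ with a prime of $T$ lying over both it and some prescribed prime of $U$. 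I would carry this out by first choosing $\mathfrak{Q}_1$ over $Q_1$ in $U \subseteq T$, then noting $\mathfrak{Q}_1 \cap R$ is automatically a prime of $R$ lying over $\mathfrak{q}_1$ (since $(\mathfrak{Q}_1 \cap R) \cap R^G = Q_1 \cap R^G = \mathfrak{q}_1$), and renaming $P_1 := \mathfrak{Q}_1 \cap R$ — so in fact the compatibility is free once the lifts are taken in the right order. The finiteness of $G$ enters only through Lemma~\ref{l1} to guarantee integrality; given that, the whole proof is a diagram chase with LO, GU, INC on the integral edges and the GD hypothesis on the edge $R \subseteq T$.
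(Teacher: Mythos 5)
Your overall architecture is exactly the paper's: form the compositum $T=R[U]=RU$, lift the top prime of $U$ to $T$, restrict down to $R$, go down inside $R$, go down again inside $T$ using the GD-pair hypothesis applied to $R\subseteq RU\subseteq S$, and restrict back to $U$. The restriction identities and your choice of order for the lifts (choose $\mathfrak{Q}_1$ over $Q_1$ first, then \emph{define} $P_1:=\mathfrak{Q}_1\cap R$, so that compatibility is automatic) are correct and coincide with the paper's computation up to notation.

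There is, however, one genuine gap. The step ``lift $\mathfrak{q}_0$ to a prime $P_0$ of $R$ with $P_0\subseteq P_1$'' is itself a going-down step for the extension $R^G\subseteq R$, and it is not delivered by the LO/GU/INC machinery you invoke: as you yourself remark, integral extensions need not satisfy GD, and this is precisely such an instance (GU would let you go up from a prime over $\mathfrak{q}_0$ to \emph{some} prime over $\mathfrak{q}_1$, but not force it to be the already-chosen $P_1$). What rescues the argument is a special property of fixed rings: for a finite (or locally finite) action, $R^G\subseteq R$ \emph{does} satisfy going-down, because $G$ acts transitively on the set of primes of $R$ lying over a fixed prime of $R^G$ --- lift $\mathfrak{q}_0\subseteq\mathfrak{q}_1$ by GU to a chain $P_0'\subseteq P_1'$ and then apply a $\sigma\in G$ carrying $P_1'$ to $P_1$. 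The paper cites this as \cite[Proposition 2.11]{ds06}; without it the diagram chase stalls at exactly this point. A second, smaller slip: you assert in passing that $R\subseteq T$ is integral, which is false in general (take $S=\text{qf}(R)$), though your final version of the argument never uses it --- what you need, and do establish, is that $U\subseteq T$ is integral (for LO) and that $R\subseteq T$ satisfies GD because $T\in[R,S]$ and $(R,S)$ is a GD-pair.
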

\begin{proof}
Let $(R,S)$ be a GD-pair and $R^G\subseteq T\subseteq S^G.$ Suppose $\mathfrak{p}_1\subseteq\mathfrak{p}_2$ be prime ideals in $R^G$ and there exists $\mathfrak{q}_2\in\text{Spec}(T)$ such that $\mathfrak{q}_2\cap R^G=\mathfrak{p}_2.$ We need to show that there exists $\mathfrak{q}_1\subseteq\mathfrak{q}_2$ satisfying $\mathfrak{q}_1\cap R^G=\mathfrak{p}_1.$

Since $R$ is integral over $R^G$ (Lemma \ref{l1}), therefore $T\subseteq RT$ is also integral extension. By Lying Over property of integral extensions, there exists $\mathfrak{p}'_{2}\in\text{Spec}(RT)$ such that $\mathfrak{p}'_2\cap T=\mathfrak{q}_2.$ Also note that $$\mathfrak{p}'_2\cap R^G=(\mathfrak{p}'_2\cap T)\cap R^G=\mathfrak{q}_2\cap R^G=\mathfrak{p}_2.$$
As $R^G\subseteq R\subseteq RT,$ we put $\mathfrak{p}'_2\cap R=\mathfrak{p}''_2.$ And $$\mathfrak{p}'_2\cap R\cap R^G=\mathfrak{p}''_2\cap R^G=\mathfrak{p}'_2\cap R^G=\mathfrak{p}_2.$$
By \cite[Proposition 2.11]{ds06} the ring extension $R^G\subseteq R$ satisfies GD, therefore there exists $\mathfrak{p}''_1\in\text{Spec}(R)$ satisfying $\mathfrak{p}''_1\subseteq \mathfrak{p}''_2$ and $\mathfrak{p}''_1\cap R^G=\mathfrak{p}_1.$ Since the extension $R\subseteq RT$ is GD, so there exists $\mathfrak{p}'_1\in\text{Spec}(RT)$ such that $\mathfrak{p}'_1\subseteq \mathfrak{p}'_2$ and $\mathfrak{p}'_1\cap R=\mathfrak{p}''_1.$ Put $\mathfrak{q}_1=\mathfrak{p}'_1\cap T.$ The ideal $q_1$ is the required prime ideal we are looking for. To see this, note that $$\mathfrak{q}_1\cap R^G=(\mathfrak{p}'_1\cap R^G\cap T)=\mathfrak{p}'_1\cap R^G=(\mathfrak{p}'_1\cap R)\cap R^G=\mathfrak{p}''_1\cap R^G=\mathfrak{p}_1.$$ It remains to show that $\mathfrak{q}_1\subseteq \mathfrak{q}_2,$ but this follows from the containment $\mathfrak{p}'_1\subseteq \mathfrak{p}'_2.$ This proves that $(R^G, S^G)$ is a GD-pair.
\end{proof}

Recall from \cite{ds06} that an integral domain $R$ is called \emph{Pseudo-valuation domain (PVD)} if there exists a (uniquely determined) valuation overring $V$ of $R$ such that $\text{Spec}(R)=\text{Spec}(V)$ (as sets). Before embarking on to the next result we recall the following characterization of PVD-pair from \cite[Lemma 5]{jt16}:
\begin{lemma}\label{l2}
Let $R\subset S$ be an extension of integral domains. Then the following hold true:
\begin{enumerate}
\item[(i)] If $R$ is a field, then $(R, S)$ is a PVD-pair if and only if $S$ is a field algebraic
over $R.$
\item[(ii)] If $R$ and $S$ are not fields, then $(R, S)$ is a PVD-pair if and only if $R$ is a
PVD with maximal ideal $\mathfrak{m}$ and associated valuation overring $V$ and either $S\subseteq V$ and $S/\mathfrak{m}$ is algebraic over $R/\mathfrak{m}$ or $S$ is an overring of $V$ and $V/\mathfrak{m}$ is algebraic over $R/\mathfrak{m}.$
\item[(iii)] If $R$ is not a field and $S$ is a field, then $(R, S)$ is a PVD-pair if and only if $R$
is a PVD with maximal ideal $\mathfrak{m}$ and associated valuation overring $V,$ $V/\mathfrak{m}$ is algebraic over $R/\mathfrak{m}$ and $S=\text{qf}(R)$ if and only if $R$ is a PVD with associated valuation overring $V$, $V = R'$ and $S = \text{qf}(R).$ Here $R'$ denotes the integral closure of $R.$
\end{enumerate}
\end{lemma}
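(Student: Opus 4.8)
The plan is to reduce the statement to the Hedstrom--Houston characterization of pseudo-valuation domains together with the structure of the overrings of a PVD. Recall that $R$ is a PVD exactly when $R$ is quasilocal with maximal ideal $\mathfrak{m}$ and $V:=(\mathfrak{m}:\mathfrak{m})$ is a valuation overring of $R$ having maximal ideal again $\mathfrak{m}$; then $k:=R/\mathfrak{m}$ is a subfield of $K:=V/\mathfrak{m}$, the integral closure $R'$ equals $\pi^{-1}(\overline{k})$ for $\pi\colon V\to K$ the quotient map and $\overline{k}$ the algebraic closure of $k$ in $K$ (so $R'=V$ if and only if $K/k$ is algebraic, which reconciles the two descriptions in (iii)), and every overring $T$ of $R$ obeys the dichotomy: either $V\subseteq T$, so that $T$ is an overring of the valuation domain $V$ and hence a valuation domain, or $T\subseteq V$, so that $\mathfrak{m}$ is a common ideal and $T=\pi^{-1}(T/\mathfrak{m})$ with $k\subseteq T/\mathfrak{m}\subseteq K$. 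The dichotomy is immediate: if $t\in T\setminus V$ then $t^{-1}\in\mathfrak{m}$, whence for any $v\in V$ we get $vt^{-1}\in v\mathfrak{m}\subseteq\mathfrak{m}\subseteq T$ and so $v=(vt^{-1})t\in T$.

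I would first dispose of (i): if $S$ is a field algebraic over the field $R$, every $T\in[R,S]$ is a field, hence a PVD; conversely a transcendental $x\in S$ over $R$ would make $R[x]\cong R[X]$, which is not quasilocal, a non-PVD member of $[R,S]$, so $S$ is algebraic over $R$ and hence a field. For the forward implications of (ii) and (iii): with $R$ a PVD having data $\mathfrak{m},V,k,K$ and the stated extra hypotheses, take $T\in[R,S]$; by the dichotomy either $V\subseteq T$, so $T$ is a valuation domain and hence a PVD, or $T\subseteq V$, in which case $T/\mathfrak{m}$ lies between $k$ and $K$ and is algebraic over $k$ (since $S/\mathfrak{m}$ is, under the first alternative of (ii), or since $K$ is, under the second alternative of (ii) and under (iii)), so $T/\mathfrak{m}$ is a field, $\mathfrak{m}$ is maximal in $T$ and prime in the valuation overring $V$, and $T$ is a PVD by Hedstrom--Houston.

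For the converse implications, fix a PVD-pair $(R,S)$, so $R\in[R,S]$ is a PVD with data $\mathfrak{m},V,k,K$. No $s\in S$ is transcendental over $\text{qf}(R)$ (else $R[s]\cong R[X]$ is not quasilocal), so $S$ is algebraic over $\text{qf}(R)$; in case (i) this already forces $S$ to be a field. The crux --- and what I expect to be the main obstacle --- is to show that when $R$ is not a field (so $\mathfrak{m}\neq 0$) one must have $S\subseteq\text{qf}(R)$, i.e.\ $S$ is an overring of $R$. The idea: an element $x\in S$ outside $\text{qf}(R)$ would, after rescaling by a carefully chosen nonzero $c\in\mathfrak{m}$, yield an intermediate ring (morally $R[cx]$, or a variant) that is quasilocal but fails the Hedstrom--Houston criterion --- its maximal ideal being prime in no valuation overring, essentially because the relevant multiplier ring is itself a PVD and not a valuation domain --- so this ring is not a PVD although it lies in $[R,S]$, a contradiction.

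Granting that $S$ is an overring of $R$, the remainder is routine. The dichotomy applied to $S$ gives $S\subseteq V$ or $V\subseteq S$: if $s\in S\setminus V$, then $R[s]\in[R,S]$ is a PVD, hence an overring of $R$ not contained in $V$, which forces $V\subseteq R[s]\subseteq S$. If $S\subseteq V$, then for some $s\in S$ a transcendental $\pi(s)$ over $k$ would give $R[s]=\pi^{-1}(k[\pi(s)])$ with $k[\pi(s)]$ a polynomial ring, so $R[s]$ would not be quasilocal; hence $S/\mathfrak{m}$ is algebraic over $k$, the first alternative of (ii). If $V\subseteq S$, then $V\in[R,S]$ and the same computation applied to $V$ forces $K/k$ algebraic, the second alternative of (ii); this also supplies the $V/\mathfrak{m}$-part of (iii), the $V=R'$ reformulation being the recalled identity $R'=\pi^{-1}(\overline{k})$, and the remaining assertion $S=\text{qf}(R)$ in (iii) is just the overring claim above applied when $S$ is a field. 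Thus, beyond the standard PVD facts recalled at the outset, essentially all the difficulty sits in the single construction ruling out $x\in S\setminus\text{qf}(R)$.
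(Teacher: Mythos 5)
First, a point of comparison: the paper does not prove this lemma at all --- it is quoted verbatim from Jarboui and Trabelsi \cite{jt16} (their Lemma 5) and used as a black box, so there is no internal proof to measure yours against. Judged on its own terms, your reduction to the Hedstrom--Houston characterization and to the overring dichotomy for a PVD (every overring of $R$ inside $\mathrm{qf}(R)$ is comparable to $V$) is correct, and it does dispose of part (i), of the ``if'' directions of (ii) and (iii), and of everything in the ``only if'' directions \emph{except} one claim. That one claim is exactly where you stop: you must show that if $(R,S)$ is a PVD-pair and $R$ is not a field, then $S\subseteq\mathrm{qf}(R)$. You flag this yourself as ``the main obstacle'' and offer only the sentence that some rescaled ring, ``morally $R[cx]$, or a variant,'' should fail to be a PVD. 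That is a plan, not a proof: no ring is actually constructed, and no verification is given that the candidate is quasilocal yet admits no valuation overring sharing its maximal ideal. Since every other step becomes routine once this inclusion is known, essentially all of the content of the lemma beyond bookkeeping is concentrated in precisely the step you omit.

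To see that the omitted step has genuine content and that the naive rescaling fails, take $R=\mathbb{Q}[[t]]$ and $x=i$, so $x\in S\setminus\mathrm{qf}(R)$ is integral of degree $2$ over $R$ and $c=t\in\mathfrak{m}$. The ring $R[cx]=R+itR$ \emph{is} a PVD: its nonunits form the ideal $t\,\mathbb{Q}(i)[[t]]$, which is the maximal ideal of the valuation (indeed DVR) overring $\mathbb{Q}(i)[[t]]$ of $R+itR$. One must instead pass to something like $R+it^2R$, whose maximal ideal $tR+it^2R$ is the maximal ideal of no valuation overring (the only candidate being $\mathbb{Q}(i)[[t]]$, whose maximal ideal contains $it$ while $tR+it^2R$ does not); this ring lies in $[\mathbb{Q}[[t]],\mathbb{Q}(i)[[t]]]$ and is not a PVD. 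Making such a construction work for an arbitrary PVD $R$ --- where $\mathfrak{m}$ need not be principal, $V$ may have many primes, and $x$ may be non-integral of arbitrary degree over $\mathrm{qf}(R)$ --- is the real substance of the cited result, and your proposal does not supply it. The remaining steps you do write out (the dichotomy, the field case, the residue-field algebraicity via $R[s]=\pi^{-1}(k[\pi(s)])$, and the reconciliation $R'=V\iff K/k$ algebraic) are all sound.
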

We are now ready to prove the $G$-invariance of PVD-pair $(R,S).$
\begin{theorem}\label{3t2}
Assume $(R,S)$ is a PVD-pair and $G$ is finite. Then $(R^G, S^G)$ is also a PVD-pair.
\end{theorem}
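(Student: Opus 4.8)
The plan is to run the three-case analysis of Lemma~\ref{l2} simultaneously for $(R,S)$ and for $(R^G,S^G)$, checking that the two pairs land in the same case and that each hypothesis appearing in Lemma~\ref{l2} transfers. Write $K=\text{qf}(R)$; since $G$ is finite, every $\sigma\in G$ fixes $R$ setwise (as $\sigma^{-1}(R)\subseteq R$ as well), so it restricts to an automorphism of $R$ and extends to one of $K$, and clearing denominators by multiplying through $\prod_{\sigma\in G}\sigma(b)$ shows $\text{qf}(R^G)=K^G$, and likewise $\text{qf}(S^G)=\text{qf}(S)^G$; moreover $R^G=R\cap S^G$, and whenever $\mathfrak{a}\subseteq R$ is an ideal and $A$ is a $G$-stable ring with $R\subseteq A\subseteq\text{qf}(S)$, an element of $\mathfrak{a}\cap A^G$ lies in $R$ and is $G$-fixed, hence in $R^G$, so $\mathfrak{a}\cap A^G=\mathfrak{a}\cap R^G$. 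Two facts then do most of the work. First, since $R$ is integral over $R^G$ by Lemma~\ref{l1}, a domain is a field if and only if its fixed ring is (a domain with a field integral over it, or a domain integral over a field, is a field), so the field/non-field status of $R$ and of $S$ is inherited by $R^G$ and $S^G$, and the two pairs fall under the same one of the three cases. Second, again because $R$ is integral over $R^G$, if $R$ is quasilocal with maximal ideal $\mathfrak{m}$ then $R^G$ is quasilocal with maximal ideal $\mathfrak{m}\cap R^G$ (using LO together with the fact that a prime of $R$ lying over a maximal ideal of $R^G$ is maximal), and $R/\mathfrak{m}$ is integral, hence algebraic, over $R^G/(\mathfrak{m}\cap R^G)$.

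If $R$ is a field, then by Lemma~\ref{l2}(i) $S$ is a field algebraic over $R$; hence $S^G$ and $R^G$ are fields, and $S^G$ is algebraic over $R^G$ because both $S/R$ and $R/R^G$ are algebraic, so Lemma~\ref{l2}(i) gives that $(R^G,S^G)$ is a PVD-pair. Suppose now $R$ is not a field, so we are in case (ii) or (iii). The first substantial step is to show $R^G$ is itself a PVD. I would invoke the Hedstrom--Houston criterion that a quasilocal domain is a PVD exactly when its maximal ideal $\mathfrak{m}$ is strongly prime, i.e.\ $ab\in\mathfrak{m}$ with $a,b\in K$ forces $a\in\mathfrak{m}$ or $b\in\mathfrak{m}$, and verify this for $R^G$: if $a,b\in K^G$ with $ab\in\mathfrak{m}\cap R^G\subseteq\mathfrak{m}$, then (say) $a\in\mathfrak{m}\subseteq R$, and since $a\in K^G$ we get $a\in R\cap K^G=R^G$, hence $a\in\mathfrak{m}\cap R^G$, the maximal ideal of the quasilocal ring $R^G$.

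Let $V$ be the associated valuation overring of $R$. Since $V$ is canonically attached to $R$ (for instance $V=(\mathfrak{m}:\mathfrak{m})$) it is $G$-stable, so $V^G=V\cap K^G$ is defined; it is a valuation domain (for $x\in K^G$ either $x$ or $x^{-1}$ lies in $V$, hence in $V^G$) with quotient field $K^G=\text{qf}(R^G)$, and its maximal ideal equals $\mathfrak{m}\cap V^G=\mathfrak{m}\cap R^G$, the maximal ideal of $R^G$; as a PVD has a unique valuation overring sharing its maximal ideal, $V^G$ is the associated valuation overring of $R^G$. Now the remaining hypotheses of Lemma~\ref{l2}(ii),(iii) transfer: the dichotomy $S\subseteq V$ versus $V\subseteq S$ passes to $S^G\subseteq V^G$ versus $V^G\subseteq S^G$ because intersecting with $K^G$ preserves inclusions; the condition $V=R'$ of (iii) passes to $V^G=(R^G)'$ because $V^G$ is integral over $R^G$, integrally closed (being a valuation domain), and has quotient field $\text{qf}(R^G)$; the condition $S=\text{qf}(R)$ passes to $S^G=K^G=\text{qf}(R^G)$; and each algebraicity hypothesis ``$A/\mathfrak{m}$ algebraic over $R/\mathfrak{m}$'' for $A\in\{S,V\}$ passes to ``$A^G/(\mathfrak{m}\cap A^G)$ algebraic over $R^G/(\mathfrak{m}\cap R^G)$'', since $A^G/(\mathfrak{m}\cap A^G)$ embeds in $A/\mathfrak{m}$, which is algebraic over $R/\mathfrak{m}$, which is algebraic over $R^G/(\mathfrak{m}\cap R^G)$. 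Substituting these into Lemma~\ref{l2}(ii) or (iii) yields that $(R^G,S^G)$ is a PVD-pair.

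The step I expect to be the main obstacle is proving that $R^G$ is a PVD and, in tandem, identifying $V^G$ as its associated valuation overring: the descent of the PVD property is not one of the results of \cite{ds06} (unlike the descent of ``divided domain''), so it must be obtained from the strongly-prime characterization, and it is the uniqueness of a valuation overring with a prescribed maximal ideal that then pins $V^G$ down and makes the rest of Lemma~\ref{l2}'s hypotheses transfer essentially formally. A secondary point requiring care is the routine but pervasive bookkeeping that intersections of ideals and subrings with the fixed subfield $K^G$ commute with the operation $(-)^G$, which is what legitimizes identities such as $\mathfrak{m}\cap S^G=\mathfrak{m}\cap R^G$ used throughout.
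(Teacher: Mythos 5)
Your proposal is correct, and its skeleton is the same as the paper's: both run the three-case analysis of Lemma~\ref{l2}, use integrality of $R\supseteq R^G$ and $S\supseteq S^G$ (Lemma~\ref{l1}) to show the descended pair falls into the same case, and then check that each hypothesis of the relevant case transfers under $(-)^G$. The one genuine divergence is the pivotal step ``$R^G$ is a PVD with associated valuation overring $V^G$.'' The paper simply quotes \cite[Corollary~2.17]{ds06} for this; you instead rederive it from the Hedstrom--Houston strongly-prime criterion (intersecting $ab\in\mathfrak{m}$ with $\text{qf}(R^G)=\text{qf}(R)^G$) and then pin down $V^G$ via the uniqueness of the valuation overring sharing the maximal ideal, checking along the way that $V^G$ is a valuation domain of $K^G$ with maximal ideal $\mathfrak{m}\cap R^G$. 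Your stated reason for doing so --- that the descent of the PVD property ``is not one of the results of \cite{ds06}'' --- is mistaken, since Dobbs and Shapiro's Corollary~2.17 is exactly that statement; but your self-contained argument is valid and buys independence from that reference. The remaining bookkeeping (the identity $\mathfrak{a}\cap A^G=\mathfrak{a}\cap R^G$ for ideals $\mathfrak{a}\subseteq R$, the transfer of $S\subseteq V$ versus $V\subseteq S$, of $V=R'$, of $S=\text{qf}(R)$, and of residue-field algebraicity via the embedding $A^G/(\mathfrak{m}\cap A^G)\hookrightarrow A/\mathfrak{m}$) matches the paper's use of \cite[Proposition~5.6]{am} and \cite[Lemma~2.3(b)]{ds06} and is handled correctly, indeed a bit more carefully than in the paper, which does not verify that $\mathfrak{m}^G$ is an ideal of $S^G$ before forming the residue extension.
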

\begin{proof}
We shall use Lemma \ref{l2}, therefore there are three separate cases. Suppose $(R,S)$ is a PVD-pair.
\begin{enumerate}
\item[\emph{Case 1}.] \emph{When $R$ is a field}\\
For this case one can adapt the first seven lines of the proof of \cite[Theorem 2.12]{nz20}. For the sake of completeness we provide the full proof here. By Lemma \ref{l1}, $R$ is integral over $R^G,$ therefore $R$ is a field if and only if $R^G$ is a field. Since $(R,S)$ is a PVD-pair, by Lemma \ref{l2}(i), $S$ is a field algebraic over $R.$ Once again invoking Lemma \ref{l1}, we see that $S^G$ is also a field. Moreover, $R^G\subset R\subset S,$ it follows from the transitivity of the algebraic property that $S$ is algebraic over $R^G.$ But $S^G\subset S,$ hence $R^G\subset S^G$ is an algebraic extension. Thus by Lemma \ref{l2} (i), $(R^G, S^G)$ is a PVD-pair.
\item[\emph{Case 2}.] \emph{When $R$ and $S$ are not fields}\\
It follows by the integrality of $R^G\subset R$ that $R^G$ is not a field. Similarly $S^G$ is also not a field. It follows Lemma \ref{l2}(ii), $R$ is a
PVD with maximal ideal $\mathfrak{m}$ and associated valuation overring $V.$ By \cite[Corollary 2.17]{ds06}, $(R^G,\mathfrak{m}^G)$ is a PVD with canonically associated domain $V^G.$ If $S\subseteq V,$ then $S^G\subseteq V^G.$ Since $R^G\subseteq R$ algebraic, hence, by \cite[Proposition 5.6]{am}, $R^G/\mathfrak{m}^G\subseteq R/\mathfrak{m}$ is also algebraic. But $R^G/\mathfrak{m}^G\subseteq R/\mathfrak{m}\subseteq S/\mathfrak{m},$ therefore $S/\mathfrak{m}$ is algebraic over $R^G/\mathfrak{m}^G.$ It follows that $S^G/\mathfrak{m}^G$ is algebraic over $R^G/\mathfrak{m}^G.$ If $S$ is an overring of $V,$ then $S^G$ is also an overring of $V^G.$ Using arguments similar to the above on can easily prove that $V^G/\mathfrak{m}^G$ is algebraic over $R^G/\mathfrak{m}^G.$ Thus once again by Lemma \ref{l2}(ii), it follows that $(R^G, S^G)$ is PVD-pair.
\item[\emph{Case 3}.] \emph{When $R$ is not a field but $S$ is a field}\\
Once again by invoking Lemma \ref{l1}, we can easily see that $R^G$ is nor a field and $S^G$ is a field. We have already seen that  $(R^G,\mathfrak{m}^G)$ is a PVD with canonically associated domain $V^G,$ and that $V^G/\mathfrak{m}^G$ algebraic over $R^G/\mathfrak{m}^G.$ It remains to be shown that $S^G=\text{qf}(R^G).$ By Lemma \ref{l2}(iii) we have$S=\text{qf}(R).$ But $\text{qf}(R^G)=\text{qf}(R)^G,$ by \cite[Lemma 2.3(b)]{ds06}. Hence $S^G=\text{qf}(R^G).$ Thus $(R^G, S^G)$ is PVD-pair (Lemma \ref{l2}(iii)).
\end{enumerate}
\end{proof}
One of the popular theme in the study of intermediate rings between $R$ and $S$ is the ``finite length
of intermediate chains of domains" property (for short FICP) between $R$ and $S$ \cite{mb}. We recall below from Nasr \cite{mb} the definition of FICP.
\begin{definition}
An extension of integral domains $R\subseteq S$ is said to have the \emph{finite length
of intermediate chains of domains} property (or \emph{FICP}) if each chain of intermediate rings between $R$ and $S$ is finite.
\end{definition}

\begin{theorem}\label{3t3}
Suppose $G$ is finite and $|G|\in U(R).$ If an $R\subseteq S$ has an FICP then so does $R^G\subset S^G.$
\end{theorem}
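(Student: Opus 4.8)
The plan is to relate chains of intermediate rings in $[R^G,S^G]$ to chains in $[R,S]$ via the maps $T \mapsto RT$ (extension) and $U \mapsto U^G = U \cap S^G$ (contraction), and then to transport the FICP from the latter to the former. First I would record the standing facts: since $G$ is finite, the action on $S$ is locally finite, so $R^G \subseteq R$ and $S^G \subseteq S$ are integral extensions by Lemma \ref{l1}, and moreover the hypothesis $|G| \in U(R)$ lets us use the Reynolds-type averaging operator $e(s) = \frac{1}{|G|}\hat{s}$, which is an $R^G$-linear retraction of $S$ onto $S^G$ (and of any $G$-stable intermediate ring $U$ onto $U^G$). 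This averaging is what will make the contraction map well-behaved: for a $G$-stable subring $U$ with $R \subseteq U \subseteq S$ one gets $U = R\,U^G$ is false in general, but one does get that $U \mapsto U^G$ is injective on $G$-stable intermediate rings, because $U$ is spanned over $R$ by $G$-orbits and $e$ recovers enough structure — this is the technical point I would need to nail down.

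Next I would set up the two directions. Given an intermediate ring $T \in [R^G, S^G]$, form $RT \in [R,S]$; since $T$ is already fixed by $G$ and $R$ is $G$-stable, $RT$ is a $G$-stable intermediate ring of $S$, and $RT \cap S^G \supseteq T$. The key claim is that $(RT) \cap S^G = T$; this should follow because $RT$ is integral over $T$ (as $R$ is integral over $R^G \subseteq T$), and one applies the averaging operator together with the fact that an element of $RT$ fixed by $G$ must already lie in $T$ — here again $|G|$ invertible is essential, since $e(RT) = T$ when $e$ is $R^G$-linear and $RT = \sum_\sigma \sigma(R)\cdot T = R\cdot T$ with each generator's orbit-sum landing in $T$. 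Granting this, $T \mapsto RT$ is an \emph{injective}, order-preserving map from $[R^G,S^G]$ into the set of $G$-stable members of $[R,S]$, with order-preserving one-sided inverse $U \mapsto U^G$.

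Then the argument concludes quickly: suppose $R^G \subseteq S^G$ fails FICP, so there is an infinite chain $(T_i)_{i\in I}$ in $[R^G,S^G]$; applying $T_i \mapsto RT_i$ produces a chain $(RT_i)$ in $[R,S]$, and by the injectivity just established (strict inclusions $T_i \subsetneq T_j$ force $RT_i \subsetneq RT_j$, since contracting back recovers $T_i \subsetneq T_j$) this chain is infinite, contradicting the FICP of $R \subseteq S$. Hence $R^G \subseteq S^G$ has FICP. I would also need to observe at the outset that if $R \subseteq S$ has FICP then in particular both are domains with $S$ contained in a suitable overring structure so that $R^G \subseteq S^G$ is again an extension of domains (immediate, as $S^G$ is a subring of the domain $S$).

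The main obstacle I expect is the injectivity of $T \mapsto RT$, equivalently the identity $(RT)\cap S^G = T$. The inclusion $\supseteq$ is trivial; for $\subseteq$ one writes a fixed element of $RT$ as $\sum_k r_k t_k$ with $r_k \in R$, $t_k \in T$, but this representation is not canonical, so a naive application of $e$ gives $e(\sum r_k t_k) = \sum e(r_k) t_k \in T$ only if the element already equals $\sum r_k t_k$ after averaging — which it does, being $G$-fixed, so in fact $x = e(x) = \sum_k e(r_k) t_k \in R^G \cdot T = T$. So the argument does go through cleanly once one is careful that $e$ is $T$-linear (true since $T \subseteq S^G$ is fixed pointwise), and the role of $|G|\in U(R)$ — hence $|G| \in U(R^G) \subseteq U(T)$ — is exactly to make $e$ exist as a $T$-linear retraction. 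This is the one spot to write out in full; everything else is formal chain-chasing.
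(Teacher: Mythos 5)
Your proposal is correct and follows essentially the same route as the paper: both arguments reduce to showing $T\mapsto RT$ is injective from $[R^G,S^G]$ to $[R,S]$, and both establish this by averaging a representation $\sum_k r_k t_k$ over $G$ and dividing by $|G|$ (your Reynolds operator $e$ is exactly the paper's passage from $|G|t_2=\sum_j \hat{r_j}t_j$ to $t_2\in T_1$). Your phrasing via the identity $(RT)\cap S^G=T$ is a slightly more structural packaging of the same computation, and the final paragraph of your proposal correctly resolves the only delicate point (the $T$-linearity of $e$ and the role of $|G|\in U(R)$).
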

\begin{proof}
We shall prove that the map $T\rightarrow RT$ from $[R^G, S^G]$ to $[R,S]$ is injective. Suppose $T_1\neq T_2$ and $R^G\subseteq T_1\subset T_2\subseteq S.$ For if $RT_1=RT_2,$ then $t_2\in T_2-T_1$ belongs to $RT_1.$ Therefore $$t_2=\sum_{j=1}^{l}r_j t_j,$$ where $r_j\in R$ and $t_j\in T_1$ for each $j.$ Applying the $\sigma\in G$ to the both sides of the above equation, summing over $\sigma\in G$ and keeping in mind that $t_j\in S^G,$ we get $$|G|t_2=\sum_{j=1}^{l}\hat{r_j}t_j.$$ Since $|G|\in U(R),$ therefore $t_2=\dfrac{1}{|G|}\displaystyle\sum_{j=1}^{l}\hat{r_j}t_j \in T_1,$ a desired contradiction. Hence the map is injective and thus $[R^G, S^G]$ satisfies FICP.
\end{proof}


\begin{thebibliography}{00}
\bibitem{kap}
I. Kaplansky, Commutative Rings, Revised edition. University of Chicago Press, Chicago (1974).
\bibitem{glaz}
S. Glaz, Fixed rings of coherent regular ring, Comm. Algebra 20, (1992), 2635–2651.
\bibitem{ds07a}
D.E. Dobbs and J. Shapiro, Descent of minimal overrings of integrally closed domains to fixed rings, Houst. J. Math., 32 (2007), 59--82.
\bibitem{ds06}
D.E. Dobbs and J. Shapiro, Descent of divisibility properties of integral domains to fixed rings, Houst. J. Math.,32 (2006), 337--353.
\bibitem{nag}
K. R. Nagarajan, Groups acting on Noetherian rings. Nieuw Arch. Wisk. 16, (1968), 25--29.
\bibitem{berg}
G.M. Bergman, Groups acting on hereditary rings, Proc. Lond. Math. Soc. 3(23), (1971) 70--82. 
\bibitem{ds07b}
D.E. Dobbs and J. Shapiro, Transfer of Krull dimension, lying-over, and going-down to the fixed ring. Comm. Algebra  35, (2007), 1227--1247.
\bibitem{as17}
A. Schmidt, Properties of ring extensions invariant under group actions, Inter. Elec. J. Algeb., 21 (2017), 39-54.
\bibitem{nz20}
N. Zeidi, Pairs of rings invariant under group action, Beitr. Algebra Geometry, 21 (2020).
\bibitem{rv76}
R. Ramaswamy and T.M. Visvanathan, Overring properties of G-domains, Proc. of Amer. Math. Soc., 58 (1976), 59--66.
\bibitem{jt16}
N. Jarboui amd S. Trabelsi, Some results about proper overrings of pseudo-valuation domains, J. Algebra and Its Appl., 6 (2016), 1650099-1--1650099-16.
\bibitem{am}
M.F. Atiyah and I.G. MacDonald, Introduction to Commutative Algebra, CRC Press, Reprint (2019).
\bibitem{mb}
M.B. Nasr, On finiteness of chains of intermediate rings, Monatsh Math, 158 (2009), 97--102.
\end{thebibliography}
\end{document}